\documentclass[10pt,a4paper]{article}
\usepackage[utf8]{inputenc}
\usepackage[english]{babel}
\usepackage{amsmath}
\usepackage{amsfonts}
\usepackage{amssymb}
\usepackage{amsthm}
\usepackage{graphicx}

\newtheorem{theorem}{Theorem}[section]
\newtheorem{thm}[theorem]{Theorem}

\newtheorem{cor}[theorem]{Corollary} 
\newtheorem{definition}[theorem]{Definition} 
\newtheorem{lem}[theorem]{Lemma}

\theoremstyle{remark}
\newtheorem{example}[theorem]{Example}
\newtheorem{rem}[theorem]{Remark}

\numberwithin{claim}{theorem}

\begin{document}

 \title{On generalized constant ratio surfaces with higher codimension}

\author{Alev Kelleci\footnote{F\i rat University, Faculty of Science, Department of Mathematics, 23200 Elaz\i\u g, Turkey. e-mail: alevkelleci@hotmail.com},  Nurettin Cenk Turgay \footnote{Istanbul Technical University, Faculty of Science and Letters, Department of  Mathematics, 34469 Maslak, Istanbul, Turkey. e-mail: turgayn@itu.edu.tr} and Mahmut Erg\" ut\footnote{Nam\i k Kemal University, Faculty of Science and Letters, Department of Mathematics, 59030 Tekirda\u g, Turkey. email: mergut@nku.edu.tr}}

\maketitle

\begin{abstract}
In this paper, we study generalized constant ratio surfaces in the Euclidean 4-space. We also obtain a classifications of constant slope surfaces.

\noindent \textbf{MSC 2010 Classification.} 53B25(Primary); 53A35, 53C50 (Secondary)

\noindent \textbf{Keywords.}
\end{abstract}

\section{Introduction}\label{sec:1}
 
It is well-known that the position function is the simplest  geometrical object
associated with a submanifold in a Euclidean space. Because of this reason, 
many problems related with understanding geometry of submanifolds with a given 
restriction on their position vectors have been studied by many mathematicians 
so far.

In this direction, the notion of constant ratio submanifolds in Euclidean spaces
 firstly introduced by B.-Y. Chen in \cite{ChenCR}. Let $M$ be a submanifold 
isometrically immersed in $\mathbb E^m$, there is a natural orthogonal 
decomposition of the position vector $x$ at each point on M; namely
\begin{equation}\label{DeCompofx}
x=x^T+x^{\bot}
\end{equation}
where $x^T$ and $x^{\bot}$ denote the tangential and normal components of
 $x$, respectively. A submanifold of Euclidean space is said to be of \textit%
{constant ratio} if the ratio of the lengths of the tangential and normal 
components of its position vector is constant. Complete classification of
 constant ratio hypersurfaces in Euclidean spaces was obtained in 
\cite{ChenCR} (see also \cite{B2007}). In addition, constant ratio space-like
 submanifolds in pseudo-Euclidean space have been completely 
classified in \cite{Chen2003}. On the other hand, the submanifold $M$ is said
 to be  a $T$-constant (respectively, $N$-constant) submanifold if $x^T$ 
(respectively, $x^\bot$) has constant length. B. Y. Chen studied $T$-constant 
and $N$-constant submanifolds in semi-Euclidean spaces in \cite{ChenTNsub}.

Another important class of submanifolds is constant slope (CS) hypersurfaces 
defined by M. I. Munteanu in \cite{Munt2010}. When the codimension of $M$ is 1,
 it is said to be a CS hypersurface if its position vector $x$  makes a constant
 angle with its unit normal vector field, $N$. In this case $N$ is parallel, 
one can generalize this definition to higher codimensions as following:
\begin{definition}\label{DefoFCSS}
Let $M^n$ be a submanifold in a semi-Euclidean space $\mathbb E^m$ with the 
position vector $x$. $M$ is said to be  a constant slope submanifold if there 
exists a parallel normal vector field $N$ which makes constant angle with $x$.
\end{definition}
In \cite{Munt2010}, constant slope surfaces (CSS) in $\mathbb E^3$ have been studied
 and this study moved into the Minkowski 3-space in \cite{FY2012,FW2013}. First of two
 main purposes of this article is to study constant slope surfaces in the Euclidean 
4-space $\mathbb E^4$.

Constant slope (CS) hypersurfaces posses an interesting property: The tangential component $x^T$ of the position vector
 onto the tangent plane of the surface $M$ is a principal direction (see \cite{Munt2010}). As we will describe in Sect. 3, the tangential component of the position vector of a CSS is  a principal direction of all of its shape operators (See Corollary \ref{COROCSS1}). As a generalization of the concept of constant slope surfaces, in \cite{FM2014}, Fu and Munteanu   studied  surfaces in the Euclidean 3-space with the property that the tangential component of the position vector remains a principal direction but without the restriction of being constant of the angle function. They preferred to use generalized constant ratio surfaces (shortly, GCR surfaces), 
in order to point out the connection with the CR surfaces defined by Chen (See Sect. 4).   Recently in \cite{FuYang2016,YuFu2017GCRS} the completed classification of GCR surfaces in Minkowski 3-spaces has been obtained. Also in \cite{FM2014} and \cite{FuYang2016,YuFu2017GCRS}, all flat and constant mean curvature GCR surfaces, respectively, in $\mathbb E^3$ and 
$\mathbb E^3_1$ were classified.

Before we proceed to the other definition that we would like to give, we want to mention about class $\mathcal A$ immersions into the product spaces  $Q^n_c \times \mathbb R$, where $Q^n_c$ denotes the Riemannian space form with dimension $n$ and sectional curvatures $c=\pm1$. An immersion $f:M^n\rightarrow Q^n_c \times \mathbb R$ is said to belong class $\mathcal A$
if the tangential part of $\partial_t$ is principal directions of all shape operators of $f$, \cite{MT}. The notion of $\mathcal A$ immersions generalize constant principle direction (CPD) surfaces in $Q^2_c \times \mathbb R$ into higher codimesions 
(See \cite{DFJvK2009,DMuntNistr2011,To,AMT2017} for CPD surfaces). By using a similar idea, one can give the definition of GCR submanifolds as following: 
\begin{definition}\label{DefoFGCR}
Let $M^n$ be a submanifold in $\mathbb{E}^{m}$ and $x$ be its position vector.
 $M$ is said to be a generalized constant ratio (GCR) submanifold if the 
tangential part $x^T$ of $x$ is one of principal directions of all shape 
operators of $M$.
\end{definition}
As we mention in Sect. 4, GCR submanifolds defined as above satisfy some geometrical 
properties being similar to GCR surfaces in 3-dimesional semi-Euclidean spaces. Furthermore,
 we would like to add that R. Tojiero and B. Mendoza the definition of the class$-\mathcal A$
 of the spaces is used in the case of codimension being larger than one. By using this 
definition given in \cite{MT,To}, a submanifold of Euclidean and semi-Euclidean spaces 
is called a GCR submanifold if the tangential component of the position vector of that 
submanifold is a principal direction of all shape operators. The other aim of this 
article is to understand GCR surfaces in the Euclidean 4-space $\mathbb E^4$.

This paper is organized as follows. In Sect. 2, we introduce the notation that 
we will use and give a brief summary of basic definitons in theory of submanifolds 
in Euclidean spaces. In Sect. 3, we obtain the complete classification of CSS in 
the Euclidean 4-space. In last section, we obtain the complete classification of GCR 
surfaces in the Euclidean 4-space.

\section{Basic notation and definitions}
Let $\mathbb E^m$ denote the Euclidean $m$-space with the canonical Euclidean metric tensor given by  
$$
\widetilde g=\langle\ ,\ \rangle=\sum\limits_{i=1}^m dx_i^2,
$$
where $(x_1, x_2, \hdots, x_m)$  is a rectangular coordinate system in $\mathbb E^m$. 

Consider an $n$-dimensional Riemannian submanifold  of the space $\mathbb E^m$. We denote Levi-Civita connections of $\mathbb E^m$ and $M$ by $\widetilde{\nabla}$ and $\nabla$, respectively. The Gauss and Weingarten formulas are given, respectively, by
\begin{eqnarray}
\label{MEtomGauss} \widetilde\nabla_X Y&=& \nabla_X Y + h(X,Y),\\
\label{MEtomWeingarten} \widetilde\nabla_X \xi&=& -S_{\xi}(X)+D_X {\xi},  
\end{eqnarray}
whenever $X,Y$ are tangent and $\xi$ is normal vector field on $M$, where $h$,  $D$  and  $S$ are the second fundamental form, the normal connection and  the shape operator of $M$, respectively. It is well-known that the shape operator and the second fundamental form are related by  
$$\left\langle h(X, Y), {\xi} \right\rangle = \left\langle S_{\xi}X, Y \right\rangle.$$

The Gauss and Codazzi equations are given, respectively, by
\begin{eqnarray}
\label{MinkGaussEquation} \langle R(X,Y)Z,W\rangle&=&\langle h(Y,Z),h(X,W)\rangle-
\langle h(X,Z),h(Y,W)\rangle,\\
\label{MinkRicci} \langle R^D(X,Y)\xi,\eta \rangle&=&\langle \lbrack S_\xi,S_\eta]X,Y\rangle, \\
\label{MinkCodazzi} (\nabla_X h )(Y,Z)&=&(\nabla_Y h )(X,Z),
\end{eqnarray}
whenever $X,Y,Z,W$ are tangent to $M$, where $R,\; R^D$ are the curvature tensors associated with connections $\nabla$ 
and $D$, respectively. We note that  $\bar \nabla h$ is defined by
$$(\nabla_X h)(Y,Z)=D_X h(Y,Z)-h(\nabla_X Y,Z)-h(Y,\nabla_X Z).$$ 
A submanifold  $M$ is said to have flat normal bundle if $R^D=0$  identically.

The mean curvature vector field $H$ of the surface $M$ is defined as 
\begin{equation} \label{MC}
H=\frac{1}{2}tr h.
\end{equation}\label{GC}
If $M$ is a surface, i.e, $n=2$, then the Gaussian curvature $K$ of the surface $M^2$ is defined as 
\begin{equation}
K=\frac{R(X,Y,X,Y)}{Q(X,Y)},
\end{equation}
if $X$ and $Y$ are chosen so that $Q(X,Y)=\langle X,X\rangle\langle Y,Y\rangle-\langle X,Y\rangle^2$ does not vanish.

\section{Classifications of Constant Slope Surfaces in $\mathbb E^4$} \label{S:Classification0}

In this section, we would like to study constant slope surfaces in the Euclidean $4$-space. First, we would like to present an example of CSS in $\mathbb E^4$.

\begin{example}\label{CSSExampleE4}
Let $\Pi$ be a hyperplane $\mathbb E^4$ with the unit normal   $c_0$. Assume that $c_2=c_2(t)$ be a curve lying on ${\mathbb S^2(r^2)}={\mathbb S^3(1)}\cap \Pi$ such that $0 < r \leq1$. Then, the surface $M$ is given by
\begin{equation} \label{x0}
x(s,t)=s\cos \left(\tan \theta \ln s\right) c_0+s \sin \left(\tan \theta \ln s\right)  c_2(t)
\end{equation}
for a constant $\theta$. By a direct computation, one can see that $x$ can be decomposed as 
\begin{equation} \label{gcrx0}
x(s,t)=s \cos^2 \theta \partial_s+s \sin \theta e_3
\end{equation}
for the unit normal vector field $e_3$ given by
\begin{equation} \label{e30}
e_3 = \sin(\tilde{u})c_0-\cos(\tilde{u})c_2(t),
\end{equation}
where $theta=\hat{(x,e_3)}$ and $\tilde{u}(s)=\theta+u(s)$. Furthermore, $e_3$ is parallel. Hence, $M$ is CSS.
\end{example}

Now, assume that $M$ is a constant slope surface  in $\mathbb E^4$ and let $x:\Omega\rightarrow M$ be its position vector, where $\Omega$ is an open subset of $\mathbb R^2$. We define a non-negative, smooth function $\mu$ by $\mu^2=\left\langle x,x\right\rangle$. Let ${e_3}$ be the unit parallel vector field such that $\langle x,e_3\rangle=\sin\theta$, for a constant $\theta$. In this case, \eqref{DeCompofx} turns into
\begin{equation} \label{ExpofS1000}
x= \mu\cos \theta e_1+ \mu\sin \theta {e_3},
\end{equation}
for a unit tangent vector field $e_1$. Let $e_2$ and ${e_4}$ be a unit vector field and a unit normal vector field, satisfying $\langle e_1,e_2\rangle=0$ and $\left\langle {e_3},{e_4}\right\rangle=0$, respectively. Since the codimension is 2, $e_4$ is also parallel. Moreover, having parallel frame field of the normal space of $M$ implies $R^D=0$ (See \cite{ChenGS}). 

\begin{rem}
If $M$ is  a surface lying on a hyperplane $\Pi$ of $\mathbb E^4$, then $M$ is obviously a CSS with $\theta=0$ and in this case $e_3$ can be taken as the unit normal of $\Pi$. On the other hand, if $\Pi$ lies on a sphere $\mathbb S^3(r^{-2})$ of radius $r$, then $M$ is again a CSS. However, this time we have $\theta=\pi/2$  and $e_3=x/r$. One can easily observe the converse of these facts: in the case $\theta=0$ and $\theta=\pi/2$ also implies $x(\Omega)\subset\Pi$ and $x(\Omega)\subset\mathbb S^3(r^{-2})$, respectively. In the remaining of this section, we exclude these trivial cases and assume $\theta\in(0,\pi/2)$.
\end{rem}

We note that the definition of $\mu$ directly implies
\begin{eqnarray}
\label{mus0} e_1(\mu)&=&\cos \theta, \\
\label{mut0} e_2(\mu)&=&0.
\end{eqnarray}
By a further computation considering \eqref{ExpofS1000} and the last two equations, we obtain the following lemma, where we put $S_{e_\beta}=S_\beta$ and $h^\beta_{ij}=\langle h(e_i,e_j),e_\beta\rangle.$

\begin{lem}\label{Case1ClassThmDiagonSpacelikekClm10}
The Levi-Civita connection $\nabla$ and the second fundemental form of $M$ are given by
\begin{subequations} \label{CASEISpacelikeLeviCivitaEq1ALL0}
\begin{eqnarray}
\label{CASEISpacelikeLeviCivitaEq1a0}\nabla _{e_{1}}e_{1}=0,  & & \nabla _{e_{1}}e_{2}=0, \\
\label{CASEILeviCivitaEq1b0} \nabla _{e_{2}}e_{1}= \frac{1+\mu h_{22}^3 \sin \theta}{\mu \cos \theta}e_2, & &\nabla _{e_{2}}e_{2}=-\frac{1+\mu h_{22}^3 \sin \theta}{\mu \cos \theta}e_1,\\
\label{secondfund0}h(e_1,e_1)=-\Big(\frac{\sin \theta}{\mu}\Big)e_3,\  h(e_1,e_2)=0,& & h(e_2,e_2)=h_{22}^3e_3+h_{22}^4e_4.
\end{eqnarray}
\end{subequations}
and the matrix representations of shape operator $S$ of $M$ with respect to $\{e_1,e_2\}$ are
\begin{align}\label{CASEISpacelikeLeviCivitaEq10}
\begin{split}
S_3=\left(\begin{array}{cc}
-\Big(\frac{\sin \theta}{\mu}\Big)&0\\
0&h_{22}^3
\end{array}\right), \quad
&S_4=\left(\begin{array}{cc}
0&0\\
0&h_{22}^4
\end{array}\right)
\end{split}
\end{align}
and $h_{22}^4$, $h_{22}^3$ satisfy 
\begin{subequations} \label{ClassThmDiagonSpacelikekCod1Case1ALL0}
\begin{eqnarray} \label{ClassThmDiagonSpacelikekCod1Case11a0}
e_1(h_{22}^3)&=&\frac{1+\mu h_{22}^3 \sin \theta}{\mu \cos \theta}(h_{11}^3-h_{22}^3),
\\\label{ClassThmDiagonSpacelikekCod1Case11b0}
e_1(h_{22}^4)&=&-\frac{1+\mu h_{22}^3 \sin \theta}{\mu \cos \theta} h_{22}^4,
\end{eqnarray}
\end{subequations}
\end{lem}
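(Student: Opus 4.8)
The plan is to work with the adapted orthonormal frame $\{e_1,e_2,e_3,e_4\}$ and to exploit the single elementary identity $\widetilde\nabla_X x=X$, valid for the position vector field of any submanifold of $\mathbb E^4$, together with the decomposition \eqref{ExpofS1000} and the hypothesis that $e_3$ — and hence, by codimension $2$, also $e_4$ — is parallel, so that $D$ vanishes on both. Throughout, the excluded cases $\theta=0,\pi/2$ are already removed, so $\cos\theta,\sin\theta\neq0$ and the divisions below are legitimate.

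First I would differentiate \eqref{ExpofS1000} along $e_1$: applying the Gauss formula \eqref{MEtomGauss} to $\widetilde\nabla_{e_1}e_1$, the Weingarten formula \eqref{MEtomWeingarten} to $\widetilde\nabla_{e_1}e_3$ (with $D_{e_1}e_3=0$), and using $e_1(\mu)=\cos\theta$, the identity $\widetilde\nabla_{e_1}x=e_1$ becomes
\[
e_1=(\cos^2\theta)\,e_1+\mu\cos\theta\,\big(\nabla_{e_1}e_1+h(e_1,e_1)\big)+(\sin\theta\cos\theta)\,e_3-\mu\sin\theta\,S_3(e_1).
\]
The normal part gives at once $h(e_1,e_1)=-(\sin\theta/\mu)\,e_3$, so $h_{11}^3=-\sin\theta/\mu$ and $h_{11}^4=0$; substituting $S_3(e_1)=h_{11}^3e_1+h_{12}^3e_2$ into the tangential part leaves $\mu\cos\theta\,\nabla_{e_1}e_1=\mu\sin\theta\,h_{12}^3\,e_2$. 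Repeating the computation along $e_2$, with $e_2(\mu)=0$, the identity $\widetilde\nabla_{e_2}x=e_2$ has normal part $0=\mu\cos\theta\,h(e_1,e_2)$, forcing $h(e_1,e_2)=0$ (hence $h_{12}^3=h_{12}^4=0$, and therefore $\nabla_{e_1}e_1=0$ from the previous step), and tangential part $\mu\cos\theta\,\nabla_{e_2}e_1=(1+\mu h_{22}^3\sin\theta)\,e_2$, which is \eqref{CASEILeviCivitaEq1b0}. The remaining connection terms $\nabla_{e_1}e_2=0$ and $\nabla_{e_2}e_2=-\tfrac{1+\mu h_{22}^3\sin\theta}{\mu\cos\theta}\,e_1$ then follow from compatibility of $\nabla$ with the metric and orthonormality of $\{e_1,e_2\}$; the expression for $h(e_2,e_2)$ is merely the definition of $h_{22}^3,h_{22}^4$, and reading off coefficients from the now-diagonal second fundamental form yields the matrices of $S_3$ and $S_4$ in \eqref{CASEISpacelikeLeviCivitaEq10}.

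To obtain \eqref{ClassThmDiagonSpacelikekCod1Case11a0}--\eqref{ClassThmDiagonSpacelikekCod1Case11b0}, I would feed the data just computed into the Codazzi equation \eqref{MinkCodazzi} for the triple $(e_1,e_2,e_2)$, i.e. $(\nabla_{e_1}h)(e_2,e_2)=(\nabla_{e_2}h)(e_1,e_2)$. Since $De_3=De_4=0$ and $\nabla_{e_1}e_2=0$, the left-hand side is simply $e_1(h_{22}^3)\,e_3+e_1(h_{22}^4)\,e_4$; on the right-hand side $h(e_1,e_2)=0$ kills the $D$-term, and the two connection terms $-h(\nabla_{e_2}e_1,e_2)-h(e_1,\nabla_{e_2}e_2)$ are evaluated from the Christoffel symbols above together with $h(e_1,e_1)$ and $h(e_2,e_2)$. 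Matching $e_3$- and $e_4$-components (and inserting $h_{11}^3=-\sin\theta/\mu$) gives exactly the two stated ODEs. I do not expect a genuine obstacle here: the whole argument is linear-algebra bookkeeping. The single point that needs care is to eliminate the off-diagonal coefficient $h_{12}^3$ — via the $e_2$-computation — before it is used, since that is what simultaneously delivers $\nabla_{e_1}e_1=0$ and the diagonal form of both shape operators; as a secondary consistency check one can verify that the other Codazzi triples, e.g. $(e_1,e_1,e_2)$, are automatically satisfied and impose no further relations.
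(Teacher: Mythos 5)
Your proposal is correct and follows essentially the same route as the paper: differentiate the decomposition \eqref{ExpofS1000} via $\widetilde\nabla_Xx=X$ with $X=e_1$ and $X=e_2$, read off the connection coefficients and second fundamental form from the tangential and normal parts, and then apply the Codazzi equation to the triple $(e_1,e_2,e_2)$. If anything, you are slightly more careful than the paper at one point, noting that the $X=e_1$ computation only yields $\mu\cos\theta\,\nabla_{e_1}e_1=\mu\sin\theta\,h_{12}^3\,e_2$ and that $h_{12}^3=0$ must be extracted from the $X=e_2$ step before $\nabla_{e_1}e_1=0$ can be concluded.
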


\begin{proof}
By considering \eqref{ExpofS1000}, $\tilde{\nabla}_X x=X$ and the normal vector field $e_3$ being parallel, one can get
\begin{equation}\label{ApplyXExpofS1000}
X=X(\mu)\cos \theta e_1+\mu \cos \theta \Big(\nabla_{X}e_1 + h(e_1,X)\Big)+X(\mu) \sin \theta{e_3}-\mu \sin \theta S_{3}X
\end{equation}
whenever $X$ is tangent to $M$. By considering \eqref{mus0} and taking $X=e_1$ in \eqref{ApplyXExpofS1000}, we obtain \eqref{CASEISpacelikeLeviCivitaEq1a0} together with
\begin{eqnarray}
\label{ApplyXExpofS10000Eq2b0} h_{11}^3= -\Big(\frac{\sin \theta}{\mu}\Big),\\
\nonumber h_{11}^4=0
\end{eqnarray}
which yields \eqref{CASEISpacelikeLeviCivitaEq10}. While considering \eqref{mut0}, \eqref{ApplyXExpofS1000} for $X=e_2$ gives 
\begin{equation*}
\nabla_{e_2}e_1= \frac{1+\mu h_{22}^3 \sin \theta}{\mu \cos \theta}e_2, \quad \nabla_{e_2}e_1= -\frac{1+\mu h_{22}^3 \sin \theta}{\mu \cos \theta}e_1, \quad h_{12}^3=0, \quad h_{12}^4=0.
\end{equation*} 
Thus, we have \eqref{CASEILeviCivitaEq1b0} and \eqref{secondfund0}. \eqref{ClassThmDiagonSpacelikekCod1Case11a0} and \eqref{ClassThmDiagonSpacelikekCod1Case11b0} follow from the Codazzi equation \eqref{MinkCodazzi} for $X=e_1$, $Y=Z=e_2$.
\end{proof}
%%%%%%%%%%%%%%%%%%%%%%%%%%%%%%%%%%%%%%%
%%%%%%%%%%%%%%%%%%%%%%%%%%%%%%%%%%%%%%%
%%%%%%%%%%%%%%%%%%%%%%%%%%%%%%%%%%%%%%%
We immediately have the following observation
\begin{cor}\label{COROCSS1}
If $M$ is a CSS  in $\mathbb E^4$, then the tangential component of its shape operator 
is one of principal directions of all shape operators of $M$.
\end{cor}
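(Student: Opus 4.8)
The plan is straightforward: Corollary \ref{COROCSS1} is an immediate reading-off of the matrix representations established in Lemma \ref{Case1ClassThmDiagonSpacelikekClm10}. First I would recall that, by \eqref{ExpofS1000}, the tangential component of the position vector is $x^T = \mu\cos\theta\, e_1$, so the tangential component of $x$ points in the direction of the unit vector field $e_1$ (here $\theta\in(0,\pi/2)$, so $\cos\theta\neq 0$ and $x^T$ is nonzero and genuinely has a well-defined direction). Thus it suffices to check that $e_1$ is an eigenvector of every shape operator $S_\xi$, $\xi$ normal.

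Next I would invoke the matrix representations \eqref{CASEISpacelikeLeviCivitaEq10}: with respect to the orthonormal frame $\{e_1,e_2\}$, both $S_3$ and $S_4$ are diagonal, hence $e_1$ is an eigenvector of $S_3$ (with eigenvalue $-\sin\theta/\mu$) and of $S_4$ (with eigenvalue $0$). Since $\{e_3,e_4\}$ is an orthonormal basis of the normal space, every normal vector field $\xi$ can be written as $\xi = a\,e_3 + b\,e_4$ for functions $a,b$, and by linearity of $\xi\mapsto S_\xi$ we get $S_\xi = a\,S_3 + b\,S_4$; therefore $S_\xi e_1 = \left(-\,a\dfrac{\sin\theta}{\mu}\right)e_1$, so $e_1$ is a principal direction of $S_\xi$ as well. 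This shows $e_1$ — equivalently the direction of $x^T$ — is a common principal direction of all shape operators, which is exactly the assertion.

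I do not anticipate a genuine obstacle here; the only point requiring a word of care is the degenerate behavior, namely that the statement is being made under the standing assumption $\theta\in(0,\pi/2)$ (the cases $\theta=0$ and $\theta=\pi/2$ having been excluded in the preceding remark), so that $x^T$ is nowhere zero and its direction is well-defined; and implicitly that $\mu>0$ on the domain under consideration so that the entries in \eqref{CASEISpacelikeLeviCivitaEq10} make sense. With that understood, the corollary follows in two lines from Lemma \ref{Case1ClassThmDiagonSpacelikekClm10}, and in fact it is the motivating observation that the subsequent study of GCR surfaces in Section 4 is designed to generalize.
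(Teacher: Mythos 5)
Your proposal is correct and matches the paper's own (implicit) argument: the paper presents the corollary as an immediate consequence of Lemma \ref{Case1ClassThmDiagonSpacelikekClm10}, precisely because the matrices in \eqref{CASEISpacelikeLeviCivitaEq10} are diagonal in the frame $\{e_1,e_2\}$ and $x^T=\mu\cos\theta\, e_1$. Your added remarks on linearity of $\xi\mapsto S_\xi$ and on the nondegeneracy coming from $\theta\in(0,\pi/2)$ are sensible elaborations of the same route, not a different one.
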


We will use the following lemma.

\begin{lem}\label{Case1ClassThmDiagonSpacelikekClm120}
There exists a local coordinate system  $(s,t)$ defined in a neighborhood $\mathcal N_p$ of a point $p\in M$ at which $\displaystyle \theta \notin \left\{0,\frac{\pi}{2}\right\}$ such that the induced metric of $M$ is
\begin{equation}\label{ClassThmDiagonSpacelikekDefgEqRESCase10}
g=\frac{1}{{\cos^2 \theta}}ds^2+m^2dt^2
\end{equation}
for a smooth function $m$ satisfying
\begin{equation}\label{Case1ClassThmDiagonSpacelikekDefm0}
e_1(m)-\frac{1+\mu h_{22}^3 \sin \theta}{\mu \cos \theta}m=0.
\end{equation}
Furthermore, the vector fields $e_1,e_2$ described as above become $\displaystyle e_1={\cos \theta}\partial_s$, \linebreak $\displaystyle e_2=\frac 1{m}\partial_t$ in $\mathcal N_p$.
\end{lem}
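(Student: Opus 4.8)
The plan is to construct the coordinates $(s,t)$ by integrating the orthonormal frame $\{e_1,e_2\}$ from Lemma \ref{Case1ClassThmDiagonSpacelikekClm10}. From equations \eqref{CASEISpacelikeLeviCivitaEq1a0} we have $\nabla_{e_1}e_1=0$ and $\nabla_{e_1}e_2=0$, while \eqref{CASEILeviCivitaEq1b0} shows that $\nabla_{e_2}e_1$ and $\nabla_{e_2}e_2$ are proportional to $e_2$ and $e_1$ respectively. Computing the Lie bracket $[e_1,e_2]=\nabla_{e_1}e_2-\nabla_{e_2}e_1=-\tfrac{1+\mu h_{22}^3\sin\theta}{\mu\cos\theta}\,e_2$, which is again proportional to $e_2$. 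This is exactly the integrability condition that guarantees the distribution spanned by $e_1$ is integrable and that one may choose a coordinate $s$ with $\partial_s$ parallel to $e_1$; since $e_1(\mu)=\cos\theta$ by \eqref{mus0} and $e_1$ is a unit field, the natural normalization is $e_1=\cos\theta\,\partial_s$ (equivalently $s$ is, up to an additive constant, $\mu/\cos\theta$ along the $e_1$-curves). Then $\langle\partial_s,\partial_s\rangle=1/\cos^2\theta$, giving the first term of \eqref{ClassThmDiagonSpacelikekDefgEqRESCase10}.

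Next I would introduce the second coordinate $t$ as a parameter along the integral curves of $e_2$, chosen so that $\partial_t=m\,e_2$ for some positive smooth function $m$; the fact that $[e_1,e_2]$ is a multiple of $e_2$ ensures $[\partial_s,\partial_t]$ stays proportional to $\partial_t$, so such a coordinate system exists in a neighborhood $\mathcal N_p$. Since $e_1\perp e_2$ we immediately get $\langle\partial_s,\partial_t\rangle=0$ and $\langle\partial_t,\partial_t\rangle=m^2$, which completes \eqref{ClassThmDiagonSpacelikekDefgEqRESCase10} and gives $e_2=\tfrac1m\partial_t$. To pin down the PDE \eqref{Case1ClassThmDiagonSpacelikekDefm0} satisfied by $m$, I would compute $[\partial_s,\partial_t]=[\tfrac1{\cos\theta}e_1,\,m\,e_2]=\tfrac{m}{\cos\theta}[e_1,e_2]+\tfrac{1}{\cos\theta}e_1(m)\,e_2=\left(\tfrac{1}{\cos\theta}e_1(m)-\tfrac{m(1+\mu h_{22}^3\sin\theta)}{\mu\cos^2\theta}\right)e_2$. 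On the other hand $[\partial_s,\partial_t]=0$ for a coordinate frame, so the coefficient must vanish, which after multiplying by $\cos\theta$ is precisely \eqref{Case1ClassThmDiagonSpacelikekDefm0}.

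The main technical point — and the only place where some care is needed — is justifying the simultaneous existence of the coordinate system $(s,t)$ rather than just each coordinate separately: one must check that the flow of $e_1$ and a transversal slice for $e_2$ can be assembled into a genuine chart, which is the standard argument (flow-box / Frobenius) valid because $e_1$ is nowhere zero near $p$ and $[e_1,e_2]\in\mathrm{span}\{e_2\}$. Everything else is bookkeeping: the metric components follow from orthonormality of $\{e_1,e_2\}$ and the normalizations $e_1=\cos\theta\,\partial_s$, $e_2=\tfrac1m\partial_t$, and the relation \eqref{Case1ClassThmDiagonSpacelikekDefm0} is forced by $[\partial_s,\partial_t]=0$. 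Finally, one notes that $m$ is automatically positive and smooth on a possibly smaller neighborhood since $\partial_t$ is a nonvanishing coordinate vector field, so the conclusion holds as stated.
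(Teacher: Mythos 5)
Your argument is correct and takes essentially the same route as the paper's proof: both compute $[e_1,e_2]=-\tfrac{1+\mu h_{22}^3\sin\theta}{\mu\cos\theta}e_2$ from Lemma \ref{Case1ClassThmDiagonSpacelikekClm10}, rescale to the frame $\bigl\{\tfrac{1}{\cos\theta}e_1,\,m e_2\bigr\}$ with $m$ a nonvanishing solution of \eqref{Case1ClassThmDiagonSpacelikekDefm0} so that the rescaled fields commute, and then invoke the standard existence of coordinates for commuting vector fields, the metric components following from orthonormality. (One minor slip in a parenthetical: since $e_1(\mu)=\cos\theta$ and $e_1=\cos\theta\,\partial_s$, the coordinate $s$ agrees with $\mu$ up to an additive constant, not with $\mu/\cos\theta$.)
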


\begin{proof}
We have $\displaystyle [e_1,e_2]=-\frac{1+\mu h_{22}^3 \sin \theta}{\mu \cos \theta}e_2$ because of \eqref{CASEISpacelikeLeviCivitaEq1ALL0}. Thus, if $m$ is a non-vanishing smooth function on $M$ satisfying  \eqref{Case1ClassThmDiagonSpacelikekDefm0}, then we have $\displaystyle \left[\frac{1}{\cos \theta}e_1,me_2\right]=0$. Therefore, there exists a local coordinate system $(s,t)$ such that $e_1={\cos \theta}\partial_s$ and  $\displaystyle e_2=\frac 1m\partial_t$. Thus, the induced metric of  $M$ is given as in \eqref{ClassThmDiagonSpacelikekDefgEqRESCase10}.
\end{proof}
Now, we are ready to obtain the classification theorem.
\begin{thm}\label{ClassThmDiagonSpacelikek0}
A constant slope surface $M$  in $\mathbb E^4$ is locally congruent to the surface described in Example \ref{CSSExampleE4}
\end{thm}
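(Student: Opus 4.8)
The plan is to integrate the structural information assembled in Lemmas \ref{Case1ClassThmDiagonSpacelikekClm10} and \ref{Case1ClassThmDiagonSpacelikekClm120} and recover the position vector explicitly in the coordinates $(s,t)$. First I would work in the neighborhood $\mathcal N_p$ with $e_1=\cos\theta\,\partial_s$, $e_2=\frac1m\partial_t$. Equation \eqref{mus0} gives $\partial_s\mu=1$, so $\mu=s$ after a translation of the $s$-parameter; combined with $e_2(\mu)=0$ this is consistent. Now rewrite \eqref{ExpofS1000} as $x=s\cos^2\theta\,\partial_s+s\sin\theta\,e_3$, which already matches \eqref{gcrx0}; the whole problem reduces to identifying the curve $s\mapsto$ (frame) and the $t$-dependence. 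The key is to differentiate $e_3$ and $e_4$: both are parallel, so $\widetilde\nabla_X e_3=-S_3X$ and $\widetilde\nabla_X e_4=-S_4X$, and from the shape operators in \eqref{CASEISpacelikeLeviCivitaEq10} one reads off $\widetilde\nabla_{e_1}e_3=\frac{\sin\theta}{\mu}e_1$, $\widetilde\nabla_{e_2}e_3=-h_{22}^3 e_2$, $\widetilde\nabla_{e_1}e_4=0$, $\widetilde\nabla_{e_2}e_4=-h_{22}^4 e_2$. In particular $\partial_s e_4=0$, so $e_4=e_4(t)$ only.

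Next I would set up the $\mathbb E^4$-valued ODE system along the $s$-curves. Writing $\tilde u(s)$ for the angle function with $\tilde u'=u'$ determined by the relation $\theta+u(s)=\tilde u$ (as in Example \ref{CSSExampleE4}), one finds from $\widetilde\nabla_{e_1}x=e_1$ and the connection formulas \eqref{CASEISpacelikeLeviCivitaEq1a0} that, along a fixed $t$-line, the pair $(\partial_s, e_3)$ rotates: $\widetilde\nabla_{e_1}e_1$ has only a normal component $h(e_1,e_1)=-\frac{\sin\theta}{\mu}e_3$, and $\widetilde\nabla_{e_1}e_3=\frac{\sin\theta}{\mu}e_1$. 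Hence the plane $\mathrm{span}\{e_1,e_3\}$ is parallel along the $s$-curve and $(e_1,e_3)$ satisfies a $2\times2$ rotation system with angular speed $\frac{\sin\theta}{\mu\cos\theta}=\frac{\tan\theta}{s}$. Solving this gives $e_1=\cos(\tan\theta\ln s)\,c_0+\sin(\tan\theta\ln s)\,c_1$ and $e_3=-\sin(\tan\theta\ln s)\,c_0+\cos(\tan\theta\ln s)\,c_1$ up to a phase, for vectors $c_0=c_0(t)$, $c_1=c_1(t)$ that are constant in $s$; a shift of the phase absorbs the additive $\theta$ and yields exactly the form \eqref{e30} with $\tilde u(s)=\theta+\tan\theta\ln s$. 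Integrating $x=s\cos^2\theta\,e_1+s\sin\theta\,e_3$ in $s$, or more directly substituting, produces \eqref{x0} with $c_2(t)$ playing the role of the second frame vector.

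Then I would pin down the $t$-dependence. Since $c_0,c_1$ (equivalently $e_4$) are functions of $t$ alone and $e_4$ is a parallel global normal, $\widetilde\nabla_{e_2}e_4=-h_{22}^4e_2$ forces $c_0$ to be $t$-independent (it is orthogonal to everything that varies), so $c_0$ is a genuine constant unit vector — the normal to a hyperplane $\Pi$. Analyzing $\widetilde\nabla_{e_2}e_2=-\frac{1+\mu h_{22}^3\sin\theta}{\mu\cos\theta}e_1+h_{22}^3e_3+h_{22}^4e_4$ together with $\langle e_2,e_2\rangle=1$ shows that the curve $t\mapsto c_2(t)$ obtained above lies in the unit sphere of the $3$-space orthogonal to... more precisely, one checks $c_2(t)\in\mathbb S^3(1)\cap\Pi=\mathbb S^2$, and the constant $r$ in Example \ref{CSSExampleE4} enters through the normalization $\langle c_2,c_2\rangle$ and the relation between $m$ and $|c_2'|$ coming from \eqref{ClassThmDiagonSpacelikekDefgEqRESCase10} and \eqref{Case1ClassThmDiagonSpacelikekDefm0}. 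Conversely, Example \ref{CSSExampleE4} already verifies that every such surface is CSS, so local congruence follows.

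The main obstacle I expect is the bookkeeping in the second step: showing that the $(s,t)$-dependence genuinely separates — i.e. that the integration "constants" $c_0,c_1$ of the $s$-ODE are functions of $t$ only and that the mixed derivatives are consistent — which requires using the flatness of the normal bundle ($R^D=0$), the Codazzi relations \eqref{ClassThmDiagonSpacelikekCod1Case1ALL0}, and the compatibility $[e_1,e_2]=-\frac{1+\mu h_{22}^3\sin\theta}{\mu\cos\theta}e_2$ to rule out any hidden obstruction. Once separation is established, recognizing the resulting curve as lying on $\mathbb S^2(r^2)=\mathbb S^3(1)\cap\Pi$ and matching phases to get precisely \eqref{x0}–\eqref{e30} is routine. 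I would therefore organize the proof as: (i) normalize $\mu=s$ and record the parallel-normal derivative formulas; (ii) solve the $s$-rotation in $\mathrm{span}\{e_1,e_3\}$; (iii) use $e_4$ parallel to freeze $c_0$ and place $c_2(t)$ on the sphere; (iv) invoke Example \ref{CSSExampleE4} for the converse and conclude local congruence.
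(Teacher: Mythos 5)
Your steps (i) and (ii) are sound and amount to a repackaging of the paper's own argument: where the paper eliminates $e_3$ to obtain the second--order ODE $s^{2}\cos^{2}\theta\, x_{ss}-s\cos^{2}\theta\, x_{s}+x=0$ and solves it to reach \eqref{TAGGEDas-3-280}, you solve the equivalent first--order rotation system for $(e_1,e_3)$ in the plane $\mathrm{span}\{e_1,e_3\}$, which is parallel along the $s$--curves; both routes yield $x=s\cos u\,c_1(t)+s\sin u\,c_2(t)$ with $u=\tan\theta\ln s$, and your version is if anything slightly cleaner.

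The genuine gap is in your step (iii). The claim that $\widetilde\nabla_{e_2}e_4=-h_{22}^4e_2$ ``forces $c_0$ to be $t$--independent (it is orthogonal to everything that varies)'' is not an argument: a priori $c_1(t)$ and $c_2(t)$ are merely an orthonormal pair spanning a $2$--plane that may rotate arbitrarily with $t$, and nothing you have written singles out a constant direction inside that plane. That such a constant direction exists is precisely the content of the theorem, and the paper spends the entire second half of its proof establishing it. Concretely, the paper (a) integrates the Codazzi relations \eqref{ClassThmDiagonSpacelikekCod1Case1ALL0} in the $(s,t)$--coordinates to obtain the explicit formulas \eqref{TAGGED-3-230}, \eqref{co20}, \eqref{TAGGED-3-240} for $h_{22}^3$, $h_{22}^4$ and $m$; (b) compares $\langle x_t,x_t\rangle$ with $m^2$ to derive \eqref{Eq1ALL0}, which is a case of equality in Cauchy--Schwarz and hence gives $c_1'=\cot\Phi\, c_2'$ as in \eqref{ld0}; and (c) substitutes the explicit $s$--dependence of $h_{22}^4$ into $(e_4)_t=-h_{22}^4\,x_t$ and invokes the mixed--partials identity $\partial_s\partial_t e_4=\partial_t\partial_s e_4$ together with $(e_4)_s=0$ to force $c_1'=0$. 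None of (a)--(c) appears in your proposal: you never compute $h_{22}^4$, never derive the parallelism $c_1'\parallel c_2'$, and the contradiction in (c) hinges entirely on the precise power of $s$ occurring in the product $h_{22}^4\,x_t$ --- this is exactly the kind of cancellation that cannot be waved away. Labelling this as ``bookkeeping'' to be settled by $R^D=0$, Codazzi and the bracket relation understates it: it is the mathematical core of the classification, and without it your argument proves only the weaker statement that $x$ has the form \eqref{TAGGEDas-3-280} for some moving orthonormal pair $c_1,c_2$, not that the surface is congruent to one from Example \ref{CSSExampleE4}.
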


\begin{proof}
In order to proof the necessary condition, we assume that $M$ is an oriented CSS with the isometric immersion $x:M\rightarrow \mathbb E^4$ satisfying $\left\langle x,x\right\rangle=\mu^2$. Since $M$ is a CSS, $x$ can be decomposed as given in \eqref{ExpofS1000}. Let $\{e_1,e_2;e_3,e_4\}$ be the local orthonormal frame field described as before in Lemma \ref{Case1ClassThmDiagonSpacelikekClm10}, the coefficients of the second fundamental form of $M$ and $(s,t)$ a local coordinate system are given as in Lemma \ref{Case1ClassThmDiagonSpacelikekClm120}. \\
Thus, we have 
\begin{equation}\label{AftClm1Eq1SCase10}
e_1= \cos \theta x_s, \quad e_2= \frac{1}{m} x_t.
\end{equation}
By considering these equations, we see that  \eqref{mus0} and \eqref{mut0} become
\begin{equation}
\label{munew0} \mu_s=1, \quad \mu_t=0
\end{equation}
which imply $\mu =s+c$. Up to an appropriated translation on $s$, we may assume $c=0$. Consequently, \eqref{ClassThmDiagonSpacelikekCod1Case11a0}, \eqref{ClassThmDiagonSpacelikekCod1Case11b0} and \eqref{Case1ClassThmDiagonSpacelikekDefm0} turn into, respectively
\begin{eqnarray}
\label{Case1SpacelikeCods0} s^{2}\cos ^{2}\theta \left(h_{22}^3 \right) _{s}+\left(\sin \theta +sh_{22}^3 \right) \left(1+ sh_{22}^3 \sin \theta \right)=0, \\
\label{Case1SpacelikeCods20} s\cos ^{2}\theta(h_{22}^4)_s+\left(1+ sh_{22}^3 \sin \theta \right) h_{22}^4=0, \\
\label{Case1Spacelikems0} s\cos ^{2}\theta m_s-m\left(1+ sh_{22}^3 \sin \theta \right)=0.
\end{eqnarray}
If we put $\displaystyle h_{22}^3=\frac{\sigma}{s}$, then the equation \eqref{Case1SpacelikeCods0} reduces to%
\begin{equation}
s\cos^2 \theta \sigma _{s}=-\sin \theta \left( 1+2\sigma \sin \theta+\sigma ^{2}\right).
\label{TAGGED-3-210}
\end{equation}%
By solving \eqref{TAGGED-3-210} and considering $\theta$ is a non-zero constant, we directly obtain the function $h_{22}^3$ as
\begin{equation}
h_{22}^3(s,t)=\frac{1}{s}\left( \cos \theta \tan \left(\Phi (t)-u(s)\right)-\sin \theta \right),  \label{TAGGED-3-230}
\end{equation}%
where we put $u(s)= \tan \theta \ln s$ and $\Phi$ is a smooth function depending on the parameter $t$. Substituting \eqref{TAGGED-3-230} into \eqref{Case1SpacelikeCods20} and \eqref{Case1Spacelikems0}, respectively, we obtain
\begin{eqnarray}
\label{co20} h_{22}^4(s,t)=s\sec \left(\Phi (t)-u(s)\right) \varrho (t),\\
\label{TAGGED-3-240} m(s,t)=s\cos \left(\Phi (t)-u(s)\right) \varrho (t),
\end{eqnarray}%
for a non-zero smooth function $\varrho$ depending only on $t$. On the other hand, considering the first equation given in \eqref{AftClm1Eq1SCase10} into \eqref{ExpofS1000}, we get 
\begin{equation} \label{newdecompose}
x=s \cos^2 \theta x_s + s \sin \theta e_3.
\end{equation}
By taking consider the first equation given in \eqref{secondfund0} in the Weingarten formula \eqref{MEtomWeingarten} and also as the unit normal vector $e_3$ is parallel, thus the last equation become 
\begin{equation}
s^2 \cos^2 \theta x_{ss}-s \cos^2 \theta x_s +x=0.
\end{equation}
Solving this PDE, we find that the position vector $x$ can be expressed as following
\begin{equation}
x(s,t)=s \cos u c_1(t)+ s\sin u c_2(t) ,
\label{TAGGEDas-3-280}
\end{equation}%
where both $c_1$ and $c_2$ are two vector-valued functions depending only on $t$ in $\mathbb{E}^{4}$ and also $u(s)=\tan \theta \ln s$. Now, we would like show the obtained surface in \eqref{TAGGEDas-3-280} becomes a CSS given in \eqref{x0}. For this reason, we have to show $c_1(t)$ becomes $c{_0}$ being a constant vector. \\
Assume that $c_1^{\prime}\neq 0$. Now, we have from \eqref{TAGGEDas-3-280},
\begin{eqnarray}
\label{xs0} x_s= \frac{1}{\cos \theta} \left[\cos (\tilde{u})c_1 (t)+\sin (\tilde{u})c_2 (t)\right], \\
\label{xt00} x_t= s \cos u c_1^{\prime}(t)+ s \sin u c_2^{\prime}(t),
\end{eqnarray}
where $\tilde{u}=\tilde{u}(s)=\theta+u(s)$. On the other hand, one can consider \eqref{xs0} in $\left\langle x_s,x_s\right\rangle= \sec^2 \theta$ defined in the metric tensor \eqref{ClassThmDiagonSpacelikekDefgEqRESCase10}, so we get
\begin{equation*}
\cos^{2} \tilde{u} \left\langle c_1(t),c_1 (t)\right\rangle +\sin^{2}\tilde{u} \left\langle c_2 (t),c_2 (t)\right\rangle + \sin 2\tilde{u} \left\langle c_1
(t),c_2 (t)\right\rangle =1,
\end{equation*}%
which implies that 
\begin{equation} \label{Eq0}
\left\langle c_1 (t),c_1 (t)\right\rangle =\left\langle c_2(t),c_2 (t)\right\rangle =1, \quad \left\langle c_1 (t),c_2 (t)\right\rangle =0.
\end{equation} 
Indeed, at this point the condition $\left\langle x,x\right\rangle=s^2$ is satisfied. Moreover, from the orthogonality of the expressions given in \eqref{xs0} and \eqref{xt00}, we obtain 
\begin{equation}
\label{Eq1d0} \left\langle c_1 ^{\prime }(t),c_2 (t)\right\rangle =\left\langle c_1 (t),c_2 ^{\prime }(t)\right\rangle =0.
\end{equation}
Finally \eqref{TAGGED-3-240} and \eqref{xt00} considering in $\left\langle x_t,x_t\right\rangle=m^2$, we get 
\begin{subequations} \label{Eq1ALL0}
\begin{eqnarray}
\label{Eq1a0} \left\langle c_1 ^{\prime }(t),c_1 ^{\prime }(t)\right\rangle&=&\varrho ^{2}(t)\cos ^{2}\Phi (t),\\
\label{Eq1b0} \left\langle c_2 ^{\prime }(t),c_2 ^{\prime }(t)\right\rangle &=&\varrho^{2}(t)\sin ^{2}\Phi (t),\\
\label{Eq1c0} \left\langle c_1 ^{\prime }(t),c_2 ^{\prime }(t)\right\rangle &=&\varrho ^{2}(t)\sin \Phi (t)\cos \Phi (t).
\end{eqnarray}
\end{subequations}
Now, let $\left\{X_1,X_2,X_3,X_4\right\}$ be an orthonormal base on $\mathbb E^4$ such that 
\begin{subequations} \label{baseEq1all0}
\begin{eqnarray}
\label{base1a0} X_1 = c_2(t), \quad X_2 = c_1(t), \quad X_3 = \frac{c_2^{\prime}(t)}{\varrho(t)\sin \Phi (t)} ,\\
\label{base1b0} \left\langle X_4,X_4\right\rangle=1 \quad and \quad \left\langle X_4,X_i\right\rangle=0, \quad i=1,2,3.
\end{eqnarray}
\end{subequations}
Here, two vector $c_1(t)$ and $c_2(t)$ satisfy the conditions given in \eqref{Eq0}, \eqref{Eq1d0} and \eqref{Eq1ALL0}. Note that, since $c_1^{\prime}$ is also the another vector in $\mathbb E^4$, so we can write this vector as a linear combination of the orthonormal base given in \eqref{baseEq1all0}. Thus by considering \eqref{Eq0}, \eqref{Eq1d0} and \eqref{Eq1ALL0}, we get 
\begin{equation} \label{ld0}
c_1^{\prime}=\cot \Phi(t) c_2^{\prime}.
\end{equation}
By using \eqref{ld0} into \eqref{xt00}, we get
\begin{equation} \label{xt0}
x_t = s \sec \Phi(t) \cos\Big(\Phi(t)-u(s)\Big)c_1^{\prime}.
\end{equation}
Note that by computing the Weingarten formula \eqref{MEtomWeingarten} for $e_4$, one can get directly  
\begin{equation}
\widetilde{\nabla}_{\partial_s} e_4= 0, \quad \widetilde{\nabla}_{\partial_t} e_4= -h_{22}^4 \partial_t.
\end{equation}
By considering \eqref{co20} and \eqref{xt0} into the last equation, we get $(e_4)_t= -{s^2} \varrho(t) \sec \Phi(t) {c_1}^{\prime}(t)$. Finally, by considering the Schwarz equality $\partial_s \partial_t {e_4} = \partial_t \partial_s {e_4}$, one can get $c_1^{\prime}=0$. But that is contradiction. Thus, $c_1(t)$ becomes $c_0$ being a constant vector and the expression \eqref{TAGGEDas-3-280} becomes a CSS given in \eqref{x0}. 
Consequently, by considering \eqref{x0} and $u(s)= \tan \theta \ln s$ into \eqref{newdecompose} we get \eqref{e30}. One can easily check the equation \eqref{gcrx0} by considering these equations \eqref{AftClm1Eq1SCase10} and \eqref{e30} into \eqref{ExpofS1000}. Thus the necessity is proved. The proof of sufficient condition follows from a direct computation.
\end{proof}

\section{Classifications of Generalized Constant Ratio Surfaces in $\mathbb E^4$} \label{S:Classification}
In this section, we obtain a classification of GCR surfaces. First, we would like to present an example of GCR surface in $\mathbb E^4$.
\begin{example}\label{GCRExampleE4}
Let  $\Pi$ be a hyperplane of $\mathbb E^4$ with  the unit normal $\varphi_0$. 
Assume that $\psi=\psi(t)$ be a curve lying on ${\mathbb S^2(r^2)}={\mathbb S^3(1)}\cap \Pi$ such that $0 < r \leq1$. Consider the surface $M$ given by
\begin{equation} \label{x}
x(s,t)=s\cos u \varphi_0+s \sin u \psi(t)
\end{equation}
for  a smooth function $u=u(s)$. Then,  $x$ can be decompossed
\begin{equation} \label{gcrx}
x(s,t)=s \cos^2 \theta \partial_s+s \sin \theta e_3.
\end{equation}
Here, the unit parallel normal vector $e_3$ is given by
\begin{equation} \label{e300}
e_3 = \sin(\tilde{\theta})\varphi_0-\cos(\tilde{\theta})\psi(t), 
\end{equation}
for a smooth function $\tilde{\theta}=\theta(s)+u(s)$ such that the angle function
 $\theta$ is between $x$ and $e_3$, satifying
\begin{equation} 
 \label{GCRSurfaceCaseIMainTHMEQ1} \tan \theta=s u^{\prime }.
\end{equation}%
Furthermore, a direct computation yields that $\partial_s$ is a principal direction of all shape operators of $M$. Hence, $M$ is GCR.
\end{example}

Let $M$ be a generalized constant ratio surface in $\mathbb E^4$, $x$ its position vector and we put $\left\langle x,x\right\rangle=\mu^2$ for a non-negative function $\mu$. We define a tangent vector field $e_1$ and  a normal vector $e_3$ by
\begin{equation} \label{ExpofS100}
x= \mu\cos \theta e_1+ \mu\sin \theta {e_3},
\end{equation}
for a function $\theta$. Let $e_2$ and ${e_4}$ be a unit tangent vector field and a unit normal vector field, satisfying $\langle e_1,e_2\rangle=0$ and $\left\langle {e_3},{e_4}\right\rangle=0$, respectively. 
Note that $\mu$ satisfies \eqref{mus0} and \eqref{mut0}. We will consider the case $D e_3=0$. 
\begin{rem}
If $\theta$ is constant on an open subset $U$ of $M$, then $U$ is CSS. Since the local classifications of constant slope surfaces given in Theorem \ref{ClassThmDiagonSpacelikek0}, we assume that $\nabla \theta$ does not vanish at any point of $M$.
\end{rem}

By a simple computation considering \eqref{ExpofS100} and the last two equations, we obtain the following lemma.

\begin{lem}\label{Case1ClassThmDiagonSpacelikekClm1}
The Levi-Civita connection $\nabla$ of $M$ is given by
\begin{subequations} \label{CASEISpacelikeLeviCivitaEq1ALL}
\begin{eqnarray}
\label{CASEISpacelikeLeviCivitaEq1a}\nabla _{e_{1}}e_{1}=\nabla _{e_{1}}e_{2}=0, &&
\\\label{CASEILeviCivitaEq1b}
\nabla _{e_{2}}e_{1}= \frac{1+\mu h_{22}^3 \sin \theta}{\mu \cos \theta}e_2, &\quad &\nabla _{e_{2}}e_{2}=-\frac{1+\mu h_{22}^3 \sin \theta}{\mu \cos \theta}e_1. 
\end{eqnarray}
\end{subequations}
and the matrix representations of shape operator $S$ of $M$ with respect to $\{e_1,e_2\}$ are
\begin{align}\label{CASEISpacelikeLeviCivitaEq1}
\begin{split}
S_3=\left(\begin{array}{cc}
-\Big(e_1(\theta)+\frac{\sin \theta}{\mu}\Big)&0\\
0&h_{22}^3
\end{array}\right), \quad
&S_4=\left(\begin{array}{cc}
0&0\\
0&h_{22}^4
\end{array}\right)
\end{split}
\end{align}
for the coefficients of second fundemental form of $M$ satisfying
\begin{subequations} \label{ClassThmDiagonSpacelikekCod1Case1ALL}
\begin{eqnarray} \label{ClassThmDiagonSpacelikekCod1Case11a}
e_1(h_{22}^3)=\frac{1+\mu h_{22}^3 \sin \theta}{\mu \cos \theta}(h_{11}^3-h_{22}^3), &&
\\\label{ClassThmDiagonSpacelikekCod1Case11b}
e_1(h_{22}^4)=-\frac{1+\mu h_{22}^3 \sin \theta}{\mu \cos \theta} h_{22}^4, &&
\\\label{ClassThmDiagonSpacelikekCod1Case11c}
h_{11}^3=-\Big(e_1(\theta)+\frac{\sin \theta}{\mu}\Big), \quad h_{11}^4=0, \quad h_{12}^3=0 \quad h_{12}^4=0.
\end{eqnarray}
\end{subequations}
Furthermore, the angle function $\theta$ satisfies
\begin{equation}\label{CASEISpacelikeLeviCivitaEq1Theta}
e_2(\theta)=0.
\end{equation}
\end{lem}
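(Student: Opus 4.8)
The plan is to follow the proof of Lemma~\ref{Case1ClassThmDiagonSpacelikekClm10} almost verbatim, the one new feature being that $\theta$ is now a function whose derivatives must be retained. First I would differentiate the decomposition \eqref{ExpofS100} using $\widetilde\nabla_X x = X$, the Gauss and Weingarten formulas \eqref{MEtomGauss}--\eqref{MEtomWeingarten}, and the hypothesis $De_3=0$, to get
\[
X = X(\mu\cos\theta)\,e_1 + \mu\cos\theta\big(\nabla_X e_1 + h(e_1,X)\big) + X(\mu\sin\theta)\,e_3 - \mu\sin\theta\,S_3 X
\]
for every tangent vector field $X$. Substituting $X=e_1$ and $X=e_2$ and reading off the $e_1$-, $e_2$-, $e_3$- and $e_4$-components (using $e_1(\mu)=\cos\theta$, $e_2(\mu)=0$ from \eqref{mus0}--\eqref{mut0}) produces, from the normal parts, $h_{11}^3 = -\big(e_1(\theta)+\sin\theta/\mu\big)$, $h_{11}^4=h_{12}^4=0$ and $h_{12}^3 = -e_2(\theta)$, and, from the tangential parts, $\langle\nabla_{e_1}e_1,e_2\rangle = \tan\theta\,h_{12}^3$, $\langle\nabla_{e_2}e_1,e_2\rangle = \frac{1+\mu h_{22}^3\sin\theta}{\mu\cos\theta}$, together with all remaining connection coefficients by orthonormality of $\{e_1,e_2\}$.

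The decisive step is to bring in the GCR hypothesis at exactly this point: since $x^T=\mu\cos\theta\,e_1$, the field $e_1$ is a principal direction of every shape operator, so in particular $S_3 e_1 = h_{11}^3 e_1 + h_{12}^3 e_2$ must be a multiple of $e_1$, forcing $h_{12}^3=0$ and hence $e_2(\theta)=0$, which is \eqref{CASEISpacelikeLeviCivitaEq1Theta}. (The operator $S_4$ gives nothing new, since $S_4e_1=0$ already.) Feeding $e_2(\theta)=0$ back into the tangential relations above collapses them to \eqref{CASEISpacelikeLeviCivitaEq1ALL}, and it turns $S_3$ and $S_4$ into the diagonal forms \eqref{CASEISpacelikeLeviCivitaEq1}.

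Finally I would obtain \eqref{ClassThmDiagonSpacelikekCod1Case11a}--\eqref{ClassThmDiagonSpacelikekCod1Case11b} from the Codazzi equation \eqref{MinkCodazzi} with $X=e_1$, $Y=Z=e_2$. Using $h(e_1,e_2)=0$, the diagonal form of $h$, the connection from \eqref{CASEISpacelikeLeviCivitaEq1ALL}, and the parallelism of both $e_3$ and $e_4$ (the latter because the normal bundle is $2$-dimensional, so $De_4=0$ follows from $De_3=0$), a direct computation gives $(\nabla_{e_1}h)(e_2,e_2) = e_1(h_{22}^3)e_3 + e_1(h_{22}^4)e_4$ and $(\nabla_{e_2}h)(e_1,e_2) = \frac{1+\mu h_{22}^3\sin\theta}{\mu\cos\theta}\big[(h_{11}^3-h_{22}^3)e_3 - h_{22}^4 e_4\big]$; comparing $e_3$- and $e_4$-components yields the two stated identities. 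I expect the only real difficulty to be bookkeeping: carrying the extra $e_1(\theta)$ and $e_2(\theta)$ terms that the non-constancy of $\theta$ introduces, and invoking the GCR condition at the right moment so that $e_2(\theta)=0$ is available to simplify the subsequent formulas.
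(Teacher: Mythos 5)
Your proposal is correct and follows essentially the same route as the paper: differentiate the decomposition \eqref{ExpofS100} via $\widetilde\nabla_Xx=X$ with $De_3=0$, read off tangential and normal components for $X=e_1,e_2$, and finish with the Codazzi equation for $X=e_1$, $Y=Z=e_2$. If anything, you are more careful than the paper, which asserts $e_2(\theta)=0$ directly when taking $X=e_2$ and only parenthetically notes that $e_2$ is a principal direction, whereas you make explicit the key step that the normal component gives $h_{12}^3=-e_2(\theta)$ and the GCR hypothesis (that $x^T\parallel e_1$ is principal for $S_3$) forces $h_{12}^3=0$.
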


\begin{proof}
By considering \eqref{ExpofS100}, $\tilde{\nabla}_X x=X$ and the normal vector field $e_3$ being parallel, one can get
\begin{equation}\label{ApplyXExpofS100}
X=X(\mu \cos \theta)e_1+\mu (\cos \theta \nabla_{X}e_1+%
\cos \theta h(e_1,X))-\mu \sin \theta S_{3}X+X(\mu \sin \theta){e_3}
\end{equation}
whenever $X$ is tangent to $M$. By considering \eqref{mus0}, \eqref{ApplyXExpofS100} becomes for $X=e_1$
\begin{eqnarray}
\label{ApplyXExpofS10000Eq2b} h_{11}^3= -\Big(e_1(\theta)+\frac{\sin \theta}{\mu}\Big),\\
\nonumber \nabla_{e_1}e_1=0,\quad \nabla_{e_1}e_2=0,\\
\nonumber h_{11}^4=0.
\end{eqnarray}
While considering \eqref{mut0}, \eqref{ApplyXExpofS100} gives for $X=e_2$
\begin{eqnarray}
\label{thetas} e_2(\theta)=0,\\
\nonumber\nabla_{e_2}e_1= \frac{1+\mu h_{22}^3 \sin \theta}{\mu \cos \theta}e_2, \quad \nabla_{e_2}e_1= -\frac{1+\mu h_{22}^3 \sin \theta}{\mu \cos \theta}e_1, \quad h_{12}^3=0, \quad h_{12}^4=0,
\end{eqnarray}
where $e_2$ is the other principal direction of $M$ corresponding with the principal curvature $h_{22}^3$. Thus, we have \eqref{CASEISpacelikeLeviCivitaEq1ALL} and \eqref{ClassThmDiagonSpacelikekCod1Case11c} and \eqref{CASEISpacelikeLeviCivitaEq1Theta} and the second fundamental form of $M$ becomes 
\begin{equation}
\label{secondfund}
h(e_1,e_1)=-\Big(e_1(\theta)+\frac{\sin \theta}{\mu}\Big)e_3,\quad h(e_1,e_2)=0,\quad\quad h(e_2,e_2)=h_{22}^3e_3+h_{22}^4e_4.
\end{equation}
By considering the Codazzi equation, we obtain \eqref{ClassThmDiagonSpacelikekCod1Case11a} and \eqref{ClassThmDiagonSpacelikekCod1Case11b}.
\end{proof}
%%%%%%%%%%%%%%%%%%%%%%%%%%%%%%%%%%%%%%%
%%%%%%%%%%%%%%%%%%%%%%%%%%%%%%%%%%%%%%%
%%%%%%%%%%%%%%%%%%%%%%%%%%%%%%%%%%%%%%%

Next, we would like to prove the following lemma.
\begin{lem}\label{Case1ClassThmDiagonSpacelikekClm12}
There exists a local coordinate system  $(s,t)$ defined in a neighborhood $\mathcal N_p$ of $p$ such that the induced metric of $M$ is
\begin{equation}\label{ClassThmDiagonSpacelikekDefgEqRESCase1}
g=\frac{1}{{\cos \theta}^2}ds^2+m^2dt^2
\end{equation}
for a smooth function $m$ satisfying
\begin{equation}\label{Case1ClassThmDiagonSpacelikekDefm}
e_1(m)-\frac{1+\mu h_{22}^3 \sin \theta}{\mu \cos \theta}m=0.
\end{equation}
Here $\theta$ is a smooth function. Furthermore, the vector fields $e_1,e_2$ described above become $\displaystyle e_1={\cos \theta}\partial_s$,  $\displaystyle e_2=\frac 1{m}\partial_t$ in $\mathcal N_p$.
\end{lem}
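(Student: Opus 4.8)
The plan is to mimic the proof of Lemma \ref{Case1ClassThmDiagonSpacelikekClm120} almost verbatim, since the only structural difference between the constant slope case and the GCR case is that $\theta$ is now a function rather than a constant; the bracket relation $[e_1,e_2]$ that drives the construction has exactly the same form in both settings. First I would read off from \eqref{CASEISpacelikeLeviCivitaEq1ALL} that $\nabla_{e_1}e_2=0$ and $\nabla_{e_2}e_1=\frac{1+\mu h_{22}^3\sin\theta}{\mu\cos\theta}e_2$, hence
\begin{equation*}
[e_1,e_2]=\nabla_{e_1}e_2-\nabla_{e_2}e_1=-\frac{1+\mu h_{22}^3\sin\theta}{\mu\cos\theta}e_2,
\end{equation*}
which is formally identical to the bracket computed in the CSS case.

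Next I would observe that $\frac{1}{\cos\theta}e_1$ is well defined since $\theta\notin\{0,\pi/2\}$ near $p$ (we have excluded those by the preceding remark), and I would introduce $m$ as a non-vanishing solution of the first-order linear PDE \eqref{Case1ClassThmDiagonSpacelikekDefm}, namely $e_1(m)=\frac{1+\mu h_{22}^3\sin\theta}{\mu\cos\theta}m$; such a local solution exists by integrating along the integral curves of $e_1$ (or by the method of characteristics applied in the $(s,t)$ chart adapted to $e_1$), and we may take $m>0$. A short computation then gives
\begin{equation*}
\left[\tfrac{1}{\cos\theta}e_1,\,m e_2\right]
=\tfrac{1}{\cos\theta}e_1(m)\,e_2+\tfrac{m}{\cos\theta}[e_1,e_2]
=\left(\tfrac{1}{\mu\cos^2\theta}(1+\mu h_{22}^3\sin\theta)\,m-\tfrac{m}{\mu\cos^2\theta}(1+\mu h_{22}^3\sin\theta)\right)e_2=0,
\end{equation*}
where I used $\left(\frac{1}{\cos\theta}e_1\right)(m)=\frac{1}{\cos\theta}e_1(m)$ and that $e_1$ acting on $m$ is governed by \eqref{Case1ClassThmDiagonSpacelikekDefm}. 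Since these two vector fields are linearly independent and commute, the simultaneous flow-box theorem provides a local coordinate system $(s,t)$ on a neighborhood $\mathcal N_p$ of $p$ with $\partial_s=\frac{1}{\cos\theta}e_1$ and $\partial_t=m e_2$, i.e. $e_1=\cos\theta\,\partial_s$ and $e_2=\frac{1}{m}\partial_t$.

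Finally, to get the metric I would dualize: since $\{e_1,e_2\}$ is orthonormal, the coframe dual to $\{\cos\theta\,\partial_s,\ \frac1m\partial_t\}$ is $\{\frac{1}{\cos\theta}ds,\ m\,dt\}$, so
\begin{equation*}
g=\left(\tfrac{1}{\cos\theta}\right)^2 ds^2+m^2\,dt^2=\tfrac{1}{\cos^2\theta}ds^2+m^2dt^2,
\end{equation*}
which is \eqref{ClassThmDiagonSpacelikekDefgEqRESCase1}. The only genuine subtlety — the "hard part," though it is mild — is justifying the existence of a strictly positive local solution $m$ of \eqref{Case1ClassThmDiagonSpacelikekDefm}; this is handled by noting \eqref{Case1ClassThmDiagonSpacelikekDefm} is a linear homogeneous ODE along the $e_1$-flow with smooth coefficients (recall $\mu,\theta,h_{22}^3$ are smooth and $\cos\theta\neq0$ near $p$), so a nowhere-zero solution exists on a possibly smaller neighborhood by prescribing positive initial data on a transversal and applying standard ODE theory; one should also note that $e_2(\theta)=0$ from \eqref{CASEISpacelikeLeviCivitaEq1Theta} guarantees $\theta=\theta(s)$ in the new chart, which keeps the coefficient of $ds^2$ a function of $s$ alone and is consistent with the claimed form of $g$.
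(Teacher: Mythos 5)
Your argument is correct and follows the paper's own proof essentially verbatim: compute $[e_1,e_2]$ from the connection formulas, choose a non-vanishing solution $m$ of \eqref{Case1ClassThmDiagonSpacelikekDefm} so that $\frac{1}{\cos\theta}e_1$ and $me_2$ commute, and invoke the flow-box theorem to produce the chart and the metric \eqref{ClassThmDiagonSpacelikekDefgEqRESCase1}. Your displayed bracket identity tacitly drops the term $-m\,e_2\!\left(\frac{1}{\cos\theta}\right)e_1$, but this vanishes precisely because of \eqref{CASEISpacelikeLeviCivitaEq1Theta}, which you do cite, so the proof is complete (and in fact slightly more careful than the paper's, which does not address this point either).
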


\begin{proof}
We have $\displaystyle [e_1,e_2]=-\frac{1+\mu h_{22}^3 \sin \theta}{\mu \cos \theta}e_2$ because of \eqref{CASEISpacelikeLeviCivitaEq1ALL}. Thus, if $m$ is a non-vanishing smooth function on $M$ satisfying  \eqref{Case1ClassThmDiagonSpacelikekDefm}, then we have $\displaystyle \left[\frac{1}{\cos \theta}e_1,me_2\right]=0$. Therefore, there exists a local coordinate system $(s,t)$ such that $e_1={\cos \theta}\partial_s$ and  $\displaystyle e_2=\frac 1m\partial_t$. Thus, the induced metric of  $M$ is as given in \eqref{ClassThmDiagonSpacelikekDefgEqRESCase1}.
\end{proof}

Now, we are ready to obtain the classification theorem.
\begin{thm}\label{ClassThmDiagonSpacelikek}
Let $x:M\rightarrow \mathbb{E}^{4}$ be a  surface decomposed as in \eqref{ExpofS100} and assume that $e_3$ is parallel. If $M$ is GCR surface, $M$ is locally congruent to a surface given in Example \ref{GCRExampleE4}.
\end{thm}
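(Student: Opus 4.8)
The plan is to mirror the structure of the proof of Theorem \ref{ClassThmDiagonSpacelikek0}, but carrying the non-constant angle function $\theta=\theta(s)$ along. First I would invoke Lemma \ref{Case1ClassThmDiagonSpacelikekClm12} to fix coordinates $(s,t)$ with $e_1=\cos\theta\,\partial_s$, $e_2=\frac1m\partial_t$, so that \eqref{mus0}--\eqref{mut0} become $\mu_s=1$, $\mu_t=0$; hence $\mu=s$ after a translation, and by \eqref{CASEISpacelikeLeviCivitaEq1Theta} the angle $\theta$ depends on $s$ alone. Then I would feed the first relation in \eqref{secondfund}, namely $h(e_1,e_1)=-\bigl(e_1(\theta)+\tfrac{\sin\theta}{s}\bigr)e_3$, into the Weingarten formula \eqref{MEtomWeingarten} for the parallel field $e_3$, exactly as in \eqref{newdecompose}: from $x=s\cos^2\theta\,x_s+s\sin\theta\,e_3$ and $\widetilde\nabla_{\partial_s}e_3=-S_3\partial_s=\bigl(\cos\theta\,\theta'+\tfrac{\sin\theta}{s}\bigr)\tfrac{1}{\cos\theta}\,x_s$ one obtains a second-order linear ODE in $s$ for $x(\cdot,t)$. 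Solving it should give
\begin{equation}
x(s,t)=s\cos u(s)\,c_1(t)+s\sin u(s)\,c_2(t),
\end{equation}
where $u=u(s)$ is tied to $\theta$ by the relation $\tan\theta=su'$ coming from comparing the $x_s$-component; this is precisely \eqref{GCRSurfaceCaseIMainTHMEQ1}.

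Next I would pin down $c_1,c_2$. Imposing $\langle x,x\rangle=s^2$ forces $\langle c_1,c_1\rangle=\langle c_2,c_2\rangle=1$, $\langle c_1,c_2\rangle=0$ as in \eqref{Eq0}. Using Lemma \ref{Case1ClassThmDiagonSpacelikekClm1} I would solve the Codazzi equations \eqref{ClassThmDiagonSpacelikekCod1Case11a}--\eqref{ClassThmDiagonSpacelikekCod1Case11b} together with \eqref{Case1ClassThmDiagonSpacelikekDefm} for $h_{22}^3$, $h_{22}^4$, $m$, getting closed forms analogous to \eqref{TAGGED-3-230}, \eqref{co20}, \eqref{TAGGED-3-240} but with the substitution $u(s)=\tan\theta\ln s$ replaced by the general $u(s)$ satisfying $su'=\tan\theta$. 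The metric coefficient $\langle x_t,x_t\rangle=m^2$ and orthogonality $\langle x_s,x_t\rangle=0$ then yield the same system \eqref{Eq1d0}, \eqref{Eq1ALL0} on the derivatives $c_1',c_2'$, whence $c_1'=\cot\Phi(t)\,c_2'$ as in \eqref{ld0}.

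The decisive step, as in the CSS case, is to show $c_1$ is a constant vector $\varphi_0$. I would argue by contradiction: assuming $c_1'\neq0$, introduce the orthonormal frame \eqref{baseEq1all0} of $\mathbb E^4$ built from $c_1,c_2,c_2'$, compute $x_t$ in the form \eqref{xt0}, and then use the Weingarten formula for the parallel field $e_4$ to get $\widetilde\nabla_{\partial_s}e_4=0$, $\widetilde\nabla_{\partial_t}e_4=-h_{22}^4\partial_t$, hence an explicit expression for $(e_4)_t$ proportional to $c_1'$. Equality of mixed partials $\partial_s\partial_t e_4=\partial_t\partial_s e_4$ then forces $c_1'=0$, a contradiction. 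So $c_1=\varphi_0$ is constant, $c_2=\psi(t)$ lies on $\mathbb S^3(1)\cap\Pi$ where $\Pi=\varphi_0^\perp$, and substituting back we recover \eqref{x}, \eqref{gcrx}, \eqref{e300}, \eqref{GCRSurfaceCaseIMainTHMEQ1}, so $M$ is locally congruent to Example \ref{GCRExampleE4}. The main obstacle I anticipate is not any single computation but bookkeeping: one must verify that the general function $u(s)$ with $su'=\tan\theta(s)$ slots consistently into every formula where the CSS proof used the specific $u=\tan\theta\ln s$, and in particular that the ODE for $x$ still integrates to the clean form above; the sufficiency direction is then a routine check, as noted for Example \ref{GCRExampleE4}.
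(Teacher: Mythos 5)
Your proposal follows essentially the same route as the paper's own proof: fix the coordinates of Lemma \ref{Case1ClassThmDiagonSpacelikekClm12} so that $\mu=s$ and $\theta=\theta(s)$, integrate the Codazzi system to get the closed forms for $h_{22}^3$, $h_{22}^4$, $m$ with $\tan\theta=su'$, derive the second-order ODE for the position vector whose solution is $x=s\cos u\,c_1+s\sin u\,c_2$, and then rule out $c_1'\neq 0$ via the parallel normal $e_4$ and equality of mixed partials. The only difference is cosmetic ordering (you set up the ODE for $x$ before solving Codazzi, the paper does the reverse), so the plan is sound and matches the paper.
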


\begin{proof}
In order to proof the necessary condition, we assume that $M$ is an oriented GCR surface with the isometric immersion $x:M\rightarrow \mathbb E^4$ satisfying $\left\langle x,x\right\rangle=\mu^2$. Since $M$ is a GCR, $x$ can be decomposed as given in \eqref{ExpofS100}. Let $\{e_1,e_2;e_3,e_4\}$ be the local orthonormal frame field described as before in Lemma \ref{Case1ClassThmDiagonSpacelikekClm1}, $h_{11}^3,h_{22}^3$ and $h_{22}^4$ be the principal curvatures of $M$ and $(s,t)$ a local coordinate system given in Lemma \ref{Case1ClassThmDiagonSpacelikekClm12}.
Note that \eqref{thetas} implies $\theta=\theta(s)$. Moreover, we have 
\begin{equation}\label{AftClm1Eq1SCase1}
e_1= \cos \theta x_s.
\end{equation}
Thus \eqref{mus0} and \eqref{mut0} become
\begin{equation}
\label{munew} \mu_s=1, \quad \mu_t=0
\end{equation}
Solving \eqref{munew} one gets $\mu =s+c_{0}$. Up to an appropriated translation on $s$, we may choose $c_{0}=0$.
Consequently \eqref{ClassThmDiagonSpacelikekCod1Case11a}, \eqref{ClassThmDiagonSpacelikekCod1Case11b} and \eqref{Case1ClassThmDiagonSpacelikekDefm} turn into, respectively
\begin{eqnarray}
\label{Case1SpacelikeCods} s^{2}\cos ^{2}\theta \left(h_{22}^3 \right) _{s}+\left( s\cos \theta \theta
^{\prime}+\sin \theta +sh_{22}^3 \right) \left(1+ sh_{22}^3 \sin \theta \right)=0, \\
\label{Case1SpacelikeCods2} s\cos ^{2}\theta(h_{22}^4)_s+\left(1+ sh_{22}^3 \sin \theta \right) h_{22}^4=0, \\
\label{Case1Spacelikems} s\cos ^{2}\theta m_s-m\left(1+ sh_{22}^3 \sin \theta \right)=0.
\end{eqnarray}
If we put 
\begin{equation}
h_{22}^3(s,t)=\frac{\sigma \cos \theta -\sin \theta }{s},
\end{equation}%
then the equation \eqref{Case1SpacelikeCods} reduces to%
\begin{equation}
s\cos \theta \sigma _{s}=-\sin \theta \left( 1+\sigma ^{2}\right).
\label{TAGGED-3-21}
\end{equation}%
Note that the solution of equation \eqref{TAGGED-3-21} is 
\begin{equation*}
\sigma (s,t)=\tan (u(s)-\Phi (t))
\end{equation*}%
where a smooth function $u$ satisfying \eqref{GCRSurfaceCaseIMainTHMEQ1} and
a smooth function $\Phi=\Phi(t)$ on $M$. Hence, the function $h_{22}^3$ becomes 
\begin{equation}
h_{22}^3(s,t)=\frac{1}{s}\left( \cos \theta \tan \left(\Phi (t)-u(s)\right)-\sin \theta \right).  \label{TAGGED-3-23}
\end{equation}%
Considering $\mu =s$ and substituting \eqref{TAGGED-3-23} into \eqref{Case1SpacelikeCods2} and \eqref{Case1Spacelikems}, respectively
\begin{eqnarray}
\label{co2} h_{22}^4(s,t)=s\sec \left(\Phi (t)-u(s)\right) \varrho (t),\\
\label{TAGGED-3-24} m(s,t)=s\cos \left(\Phi (t)-u(s)\right) \varrho (t),
\end{eqnarray}%
for a non-zero smooth function $\varrho$ depending only $t$. Considering these equations in \eqref{CASEISpacelikeLeviCivitaEq1ALL}, one can obtain the Levi-Civita connection on $M$ as regards local coordinates $(s,t)$ on $M$%
\begin{equation*}
\nabla _{\partial s}\partial s=\tan \theta \theta ^{\prime}\partial s, \quad \nabla _{\partial s}\partial t=\frac{m_{s}}{m}\partial t, \quad \nabla _{\partial t}\partial t=\left( -mm_{s}\cos ^{2}\theta \right)\partial s+\frac{m_{t}}{m}\partial t.
\end{equation*}%
The first equation given in \eqref{secondfund} with considering \eqref{AftClm1Eq1SCase1} and the first equation above applying into the Gauss formula, we have%
\begin{equation}
x_{ss}=\tan \theta \theta ^{\prime
}x_s-\left( \mathrm{sec}\theta \theta ^{\prime}+\mathrm{sec}\theta 
\frac{\tan \theta }{s}\right) e_3.  \label{TAGGED-3-25}
\end{equation}%
By considering the decomposition \eqref{ExpofS100} into account and
reordering \eqref{TAGGED-3-25}, we get 
\begin{equation}
s^{2}\cos ^{2}\theta \sin \theta x_{ss}-\left(s^{2}\cos \theta \theta
^{\prime}+s\cos ^{2}\theta \sin \theta \right)x_{s}+\left(\sin \theta
+s\cos \theta \theta ^{\prime }\right)x=0.  \label{TAGGED-3-26}
\end{equation}%
Putting $x=\Psi (s,t)\cdot s$, the previous equation turns into 
\begin{equation}
s^{2}\cos ^{2}\theta \sin \theta \Psi_{ss}+\left(s\cos ^{2}\theta \sin \theta-s^{2}\cos \theta \theta ^{\prime }\right)\Psi _{s}+\sin
^{3}\theta \Psi =0.  \label{TAGGED-3-27}
\end{equation}%
Considering \eqref{GCRSurfaceCaseIMainTHMEQ1}, the PDE \eqref{TAGGED-3-27}
can be rewritten as 
\begin{equation*}
\Psi_{uu}+\Psi =0.
\end{equation*}%
Solving this equation, we find that the position vector $x$ can be expressed
as 
\begin{equation}
x(s,t)=s\left( \cos u\varphi (t)+\sin u\psi (t)\right) ,
\label{TAGGEDas-3-28}
\end{equation}%
where both $\varphi$ and $\psi$ are vector-valued functions depending only on $t$ in $\mathbb{E}^{4}$. Now, we would like show the obtained surface in \eqref{TAGGEDas-3-28} becomes a GCR surface given in \eqref{x}. So, we have to show $\varphi=\varphi_0$ is a constant vector. \\
Assume that $\varphi^{\prime}\neq 0$. Denote, for the sake of simplicity, by $\tilde{\theta}=\tilde{\theta}(s)=\theta(s)+u(s)$. By considering \eqref{TAGGEDas-3-28} in the metric tensor \eqref{ClassThmDiagonSpacelikekDefgEqRESCase1}, one can get
\begin{equation*}
\cos^{2}(\tilde{\theta})\left\langle \varphi
(t),\varphi (t)\right\rangle +\sin^{2}(\tilde{\theta})\left\langle \psi (t),\psi (t)\right\rangle + 
\sin 2(\tilde{\theta})\left\langle \varphi
(t),\psi (t)\right\rangle =1,
\end{equation*}%
which implies that 
\begin{equation} \label{Eq}
\left\langle \varphi (t),\varphi (t)\right\rangle =\left\langle \psi(t),\psi (t)\right\rangle =1, \quad \left\langle \varphi (t),\psi (t)\right\rangle =0.
\end{equation}
Moreover, we have 
\begin{subequations} \label{Eq1ALL}
\begin{eqnarray}
\label{Eq1a} \left\langle \varphi ^{\prime }(t),\varphi ^{\prime }(t)\right\rangle&=&\varrho ^{2}(t)\cos ^{2}\Phi (t),\\
\label{Eq1b} \left\langle \psi ^{\prime }(t),\psi ^{\prime }(t)\right\rangle &=&\varrho^{2}(t)\sin ^{2}\Phi (t),\\
\label{Eq1c} \left\langle \varphi ^{\prime }(t),\psi ^{\prime }(t)\right\rangle &=&\varrho ^{2}(t)\sin \Phi (t)\cos \Phi (t),\\
\label{Eq1d} \left\langle \varphi ^{\prime }(t),\psi (t)\right\rangle &=&\left\langle \varphi (t),\psi ^{\prime }(t)\right\rangle =0.
\end{eqnarray}
\end{subequations}
Let $\left\{X_1,X_2,X_3,X_4\right\}$ be an orthonormal base on $\mathbb E^4$ such that 
\begin{subequations} \label{baseEq1all}
\begin{eqnarray}
\label{base1a} X_1 = \psi(t), \quad X_2 = \varphi(t), \quad X_3 = \frac{\psi^{\prime}(t)}{\varrho(t)\sin \Phi (t)} , \\
\label{base1b} \left\langle X_4,X_4\right\rangle=1 \quad and \quad \left\langle X_4,X_i\right\rangle=0, \quad i=1,2,3.
\end{eqnarray}
\end{subequations}
Since $\varphi^{\prime}$ is also the another vector in $\mathbb E^4$, so we can write as a linear combination of the orthonormal base given in \eqref{baseEq1all}. Thus by considering \eqref{Eq1a} and \eqref{Eq1c}, we get 
\begin{equation} \label{ld}
\varphi^{\prime}=\cot \Phi(t) \psi^{\prime}.
\end{equation}
On the other hand, from \eqref{TAGGEDas-3-28} we have
\begin{equation*}
x_t = s \cos u \varphi^{\prime} +s \sin u \psi^{\prime}.
\end{equation*} 
By using \eqref{ld} into the last equation, we get
\begin{equation} \label{xt}
x_t =\Big( s \cos u  +s \sin u \tan \Phi(t) \Big)\varphi^{\prime}.
\end{equation}
Note that by using the Weingarten formula \eqref{MEtomWeingarten}, one can get directly  
\begin{equation}
\widetilde{\nabla}_{\partial_t} e_4= -h_{22}^4 \partial_t.
\end{equation}
By considering \eqref{co2} and \eqref{xt} into the last equation, we get $\varphi^{\prime}=0$. But that is contradiction. Thus, $\varphi=\varphi_0$ and \eqref{TAGGEDas-3-28} becomes a GCR surface given in \eqref{x}. 
Consequently, by considering \eqref{x} and \eqref{AftClm1Eq1SCase1} into \eqref{ExpofS100} and because of $\mu=s$ we get
\begin{equation} \label{e3}
e_3 = \sin(\tilde{\theta})\varphi_0-\cos(\tilde{\theta})\psi(t)
\end{equation}
where $\tilde{\theta}(s)=\theta(s)+u(s)$ and $\varphi_0$ is a constant vector on $\mathbb S^3(1)$ in $\mathbb E^4$. One can easily check the equation \eqref{gcrx} by considering these equations \eqref{AftClm1Eq1SCase1} and \eqref{e3} into \eqref{ExpofS100}.
The proof of sufficient condition follows from a direct computation.
\end{proof}

\section*{Acknowledgments}
This paper is a part of PhD thesis of the first named author who is supported by The Scientific and Technological Research Council of Turkey (TUBITAK) as a PhD scholar.

%%% ENTER REFERENCES IN THE FORM

\end{document}